\DeclareFontFamily{OT1}{ptm}{}
\DeclareFontShape{OT1}{ptm}{m}{n} { <-> ptmr}{}
\DeclareFontShape{OT1}{ptm}{m}{it}{ <-> ptmri}{}
\DeclareFontShape{OT1}{ptm}{m}{sl}{ <->ptmro}{}
\DeclareFontShape{OT1}{ptm}{m}{sc}{ <-> ptmrc}{}
\DeclareFontShape{OT1}{ptm}{b}{n} { <-> ptmb}{}
\DeclareFontShape{OT1}{ptm}{b}{it}{ <-> ptmbi}{}     
\DeclareFontShape{OT1}{ptm}{bx}{n} {<->ssub * ptm/b/n}{}
\DeclareFontShape{OT1}{ptm}{bx}{it}{<->ssub * ptm/b/it}{}
\DeclareSymbolFont{bold}{OT1}{ptm}{b}{n}
\DeclareMathAlphabet{\mathbf}{OT1}{ptm}{b}{n}  
\DeclareMathAlphabet{\mathrm}{OT1}{ptm}{m}{n}
\DeclareFontFamily{OT1}{psy}{}      
\DeclareFontShape{OT1}{psy}{m}{n}{ <-> s * [0.9] psyr}{}
\DeclareFontFamily{OMS}{ptm}{}     
\DeclareFontShape{OMS}{ptm}{m}{n}{ <8> <9> <10> gen * cmsy }{}
\DeclareFontFamily{OMS}{cmtt}{}     
\DeclareFontShape{OMS}{cmtt}{m}{n}{ <8> <9> <10> gen * cmsy }{}
\DeclareSymbolFont{emsy}{OT1}{ptm}{m}{it}
\DeclareSymbolFont{emsr}{OT1}{ptm}{m}{n}
\DeclareSymbolFont{emcmr}{OT1}{cmr}{m}{n}   
\DeclareSymbolFont{emsymb}{OT1}{psy}{m}{n}  
\DeclareMathSymbol a{\mathalpha}{emsy}{"61}
\DeclareMathSymbol b{\mathalpha}{emsy}{"62}
\DeclareMathSymbol c{\mathalpha}{emsy}{"63}
\DeclareMathSymbol d{\mathalpha}{emsy}{"64}
\DeclareMathSymbol e{\mathalpha}{emsy}{"65}
\DeclareMathSymbol f{\mathalpha}{emsy}{"66}
\DeclareMathSymbol g{\mathalpha}{emsy}{"67}
\DeclareMathSymbol h{\mathalpha}{emsy}{"68}
\DeclareMathSymbol i{\mathalpha}{emsy}{"69}
\DeclareMathSymbol j{\mathalpha}{emsy}{"6A}
\DeclareMathSymbol k{\mathalpha}{emsy}{"6B}
\DeclareMathSymbol l{\mathalpha}{emsy}{"6C}
\DeclareMathSymbol m{\mathalpha}{emsy}{"6D}
\DeclareMathSymbol n{\mathalpha}{emsy}{"6E}
\DeclareMathSymbol o{\mathalpha}{emsy}{"6F}
\DeclareMathSymbol p{\mathalpha}{emsy}{"70}
\DeclareMathSymbol q{\mathalpha}{emsy}{"71}
\DeclareMathSymbol r{\mathalpha}{emsy}{"72}
\DeclareMathSymbol s{\mathalpha}{emsy}{"73}
\DeclareMathSymbol t{\mathalpha}{emsy}{"74}
\DeclareMathSymbol u{\mathalpha}{emsy}{"75}
\DeclareMathSymbol v{\mathalpha}{emsy}{"76}
\DeclareMathSymbol w{\mathalpha}{emsy}{"77}
\DeclareMathSymbol x{\mathalpha}{emsy}{"78}
\DeclareMathSymbol y{\mathalpha}{emsy}{"79}
\DeclareMathSymbol z{\mathalpha}{emsy}{"7A}
\DeclareMathSymbol A{\mathalpha}{emsy}{"41}
\DeclareMathSymbol B{\mathalpha}{emsy}{"42}
\DeclareMathSymbol C{\mathalpha}{emsy}{"43}
\DeclareMathSymbol D{\mathalpha}{emsy}{"44}
\DeclareMathSymbol E{\mathalpha}{emsy}{"45}
\DeclareMathSymbol F{\mathalpha}{emsy}{"46}
\DeclareMathSymbol G{\mathalpha}{emsy}{"47}
\DeclareMathSymbol H{\mathalpha}{emsy}{"48}
\DeclareMathSymbol I{\mathalpha}{emsy}{"49}
\DeclareMathSymbol J{\mathalpha}{emsy}{"4A}
\DeclareMathSymbol K{\mathalpha}{emsy}{"4B}
\DeclareMathSymbol L{\mathalpha}{emsy}{"4C}
\DeclareMathSymbol M{\mathalpha}{emsy}{"4D}
\DeclareMathSymbol N{\mathalpha}{emsy}{"4E}
\DeclareMathSymbol O{\mathalpha}{emsy}{"4F}
\DeclareMathSymbol P{\mathalpha}{emsy}{"50}
\DeclareMathSymbol Q{\mathalpha}{emsy}{"51}
\DeclareMathSymbol R{\mathalpha}{emsy}{"52}
\DeclareMathSymbol S{\mathalpha}{emsy}{"53}
\DeclareMathSymbol T{\mathalpha}{emsy}{"54}
\DeclareMathSymbol U{\mathalpha}{emsy}{"55}
\DeclareMathSymbol V{\mathalpha}{emsy}{"56}
\DeclareMathSymbol W{\mathalpha}{emsy}{"57}
\DeclareMathSymbol X{\mathalpha}{emsy}{"58}
\DeclareMathSymbol Y{\mathalpha}{emsy}{"59}
\DeclareMathSymbol Z{\mathalpha}{emsy}{"5A}
\DeclareMathSymbol{\bullet}{\mathalpha}{emsymb}{"B7}
\DeclareMathSymbol{\regis}{\mathalpha}{emsymb}{"D2}
\def\Bullet{\leavevmode\unkern{$\m@th\bullet$}\kern.32em\ignorespaces}
\def\Regis{\leavevmode\raise.5ex\hbox{$\m@th\regis$}}
\DeclareMathSymbol +{\mathbin}{emcmr}{`+}
\DeclareMathSymbol ={\mathrel}{emcmr}{`=}  
\DeclareMathSymbol{\Gamma}{\mathalpha}{emcmr}{"00}
\DeclareMathSymbol{\Delta}{\mathalpha}{emcmr}{"01}
\DeclareMathSymbol{\Theta}{\mathalpha}{emcmr}{"02}
\DeclareMathSymbol{\Lambda}{\mathalpha}{emcmr}{"03}
\DeclareMathSymbol{\Xi}{\mathalpha}{emcmr}{"04}
\DeclareMathSymbol{\Pi}{\mathalpha}{emcmr}{"05}
\DeclareMathSymbol{\Sigma}{\mathalpha}{emcmr}{"06}
\DeclareMathSymbol{\Upsilon}{\mathalpha}{emcmr}{"07}
\DeclareMathSymbol{\Phi}{\mathalpha}{emcmr}{"08}
\DeclareMathSymbol{\Psi}{\mathalpha}{emcmr}{"09}
\DeclareMathSymbol{\Omega}{\mathalpha}{emcmr}{"0A}
\newtheorem{theorem}{Theorem}[section]
\newtheorem{lemma}[theorem]{Lemma}
\newtheorem{proposition}[theorem]{Proposition}
\newtheorem{corollary}[theorem]{Corollary}
\theoremstyle{remark}
\newtheorem{remark}[theorem]{Remark}
\theoremstyle{definition}
\newtheorem{example}[theorem]{Example}
\theoremstyle{theorem}
\newtheorem{TheoremA}{Theorem}
\newtheorem*{maintheorem}{Main Theorem}
\newtheorem*{problem1}{Problem}
\newcommand*{\longhookrightarrow}{\ensuremath{\lhook\joinrel\relbar\joinrel\rightarrow}}
\newcommand{\ow}{\omega}
\newcommand{\p}{\partial}
\newcommand{\C}{{\mathbb{C}}}
\newcommand{\R}{{\mathbb{R}}}
\newcommand{\Q}{{\mathbb{Q}}}
\newcommand{\Z}{{\mathbb{Z}}}
\renewcommand{\epsilon}{\varepsilon}
\DeclareMathOperator{\CZ}{CZ}
\DeclareMathOperator{\rel}{rel}
\DeclareMathOperator{\std}{std}
\DeclareMathOperator{\cyl}{cyl}
\DeclareMathOperator{\reg}{reg}
\DeclareMathOperator{\tr}{trace}
\DeclareMathOperator{\pt}{pt}
\DeclareMathOperator{\Cl}{Clif}
\DeclareMathOperator{\id}{id}
\DeclareMathOperator{\im}{Im}
\DeclareMathOperator{\ind}{ind}
\DeclareMathOperator{\Fix}{Fix}
\DeclareMathOperator{\Aut}{Aut}
\DeclareMathOperator{\can}{can}
\begin{document}
\title{\bf  \Large
Unknottedness of real Lagrangian tori in $S^2\times S^2$
}

\makeatletter
\newcommand{\subjclass}[2][2010]{%
  \let\@oldtitle\@title%
  \gdef\@title{\@oldtitle\footnotetext{#1 \emph{Mathematics Subject Classification.} #2.}}%
}
\newcommand{\keywords}[1]{%
  \let\@@oldtitle\@title%
  \gdef\@title{\@@oldtitle\footnotetext{\emph{Key words and phrases.} #1.}}%
}
\makeatother

\author
{
Joontae Kim
\thanks
{
School of Mathematics, Korea Institute for Advanced Study, 85 Hoegiro, Dongdaemun-gu, Seoul 02455, Republic of Korea,  \texttt{joontae@kias.re.kr}}
}
\keywords{real Lagrangian torus, Hamiltonian isotopy}
\date{Recent modification; \today}
\date{}

\setcounter{tocdepth}{2}
\numberwithin{equation}{section}
\maketitle

\begin{abstract}
We prove the Hamiltonian unknottedness of real Lagrangian tori in the monotone $S^2\times S^2$, namely any real Lagrangian torus in $S^2\times S^2$ is Hamiltonian isotopic to the Clifford torus. The proof is based on a neck-stretching argument, Gromov's foliation theorem, and the Cieliebak--Schwingenheuer criterion.\\\\
{\bf Mathematics Subject Classification (2000)} 53D12, 53D35, 54H25
\end{abstract}


\section{Introduction}
An even dimensional smooth manifold $M$ equipped with a closed non-degenerate 2-form $\ow$ is a \emph{symplectic manifold}. By Darboux's theorem \cite[Theorem 3.2.2]{MS}, symplectic manifolds are locally standard, and hence only global properties in symplectic topology are interesting; in particular, the study of middle dimensional submanifolds along which the symplectic form vanishes, namely \emph{Lagrangian submanifolds}.

In 1986, as one of the first steps in symplectic topology \cite[Section 6]{ArnFirst}, Arnold proposed the \emph{Lagrangian knot problem} asking whether two given Lagrangians are isotopic.
As formulated in a survey of Eliashberg--Polterovich \cite{EPfour}, there are different flavors of isotopy, namely \emph{smooth}, \emph{Lagrangian} and \emph{Hamiltonian}.
Hamiltonian isotopies are Lagrangian, and Lagrangian isotopies are smooth.
Two Lagrangians are said to be \emph{unknotted} if they are isotopic to each other in one of these three ways.

A remarkable result of Gromov \cite{Gromov} says that there are no closed \emph{exact} Lagrangians in $(\R^{2n},\sum_{i=1}^ndx_i\wedge dy_i)$, and hence the extensive study of Lagrangian tori in $\R^{2n}$ has been made for a long time.
Chekanov \cite{Chekanov} first constructed a \emph{monotone} Lagrangian torus in $\R^{2n}$ for $n\ge 2$, which is Lagrangian isotopic while not Hamiltonian isotopic to the \emph{Clifford torus} $\mathbb{T}^n_{\Cl}=\times_n S^1$, i.e., products of circles in $\R^2$ of equal radius.
This exhibits that a Lagrangian isotopy cannot be in general deformed into a Hamiltonian isotopy.
His result is even more interesting since the classical invariants cannot detect this phenomenon.
Indeed, the Audin conjecture \cite[Section 6.4]{Audin}, which is proved by Polterovich \cite{Polaudin} and Viterbo \cite{Vit} in dimension 4 and Cieliebak--Mohnke \cite{CM} in any dimension, says that the minimal Maslov number (one of the classical invariants) of any Lagrangian torus in $\R^{2n}$ is always \emph{two}, so exotic monotone Lagrangian tori are hard to discover.
Auroux \cite{Auroux} constructed infinitely many monotone Lagrangian tori in $\R^6$ up to Hamiltonian isotopy, while all of them are Lagrangian isotopic. We refer to the work of Dimitroglou Rizell--Evans \cite[Corollary C]{DE} about the smooth unknottedness of monotone Lagrangian tori inside $\R^{2n}$ for $n\ge 5$ odd.

Since Lagrangian 2-planes in $\R^4$ that are asymptotically linear are trivial by Eliashberg--Polterovich \cite{EPlocal} (see also \cite{EPunknot}), one may expect reasonable unknottedness results in symplectic 4-manifolds. 
Symplectic Field Theory developed by Eliashberg--Givental--Hofer \cite{EGH} has become a core technique to address the \emph{Lagrangian unknottedness problems}. 
By a neck-stretching argument \cite{BEHWZ}, Hind \cite{HindS2S2} showed the Hamiltonian unknottedness of spheres in $S^2\times S^2$.

In contrast to spheres, the Hamiltonian unknottedness of monotone Lagrangian \emph{tori} in $S^2\times S^2$ fails. Chekanov--Schlenk \cite{ChekanovSchlenk} found a monotone Lagrangian torus in $S^2\times S^2$ which is not Hamiltonian isotopic to the \emph{Clifford torus} $\mathbb{T}_{\Cl}=S^1\times S^1$, defined as the product of the equators. This torus, 
in another description, was also found by Entov--Polterovich \cite[Example 1.22]{EPtorus}. Vianna \cite{Vian} showed that there are infinitely many Hamiltonian isotopy classes of monotone Lagrangian tori in $S^2\times S^2$. Dimitroglou Rizell--Goodman--Ivrii \cite[Theorem A]{DGI} established the \emph{Lagrangian unknottedness} of tori in $S^2\times S^2$, namely Lagrangian tori in $S^2\times S^2$ are unique up to Lagrangian isotopy. As a result, infinitely many monotone Lagrangian tori in $S^2\times S^2$ are \emph{Hamiltonianly knotted}.

In this paper, we are interested in a class more rigid than monotone Lagrangian tori, namely real Lagrangian tori.
By a \emph{real Lagrangian submanifold} $L$ in a symplectic manifold $(M,\ow)$ we mean a Lagrangian submanifold that is the fixed point set of an \emph{antisymplectic involution} $R$ of $M$, i.e., $R^2=\id_M$ and $R^*\ow=-\ow$.
The fixed point set $\Fix(R)=\{x\in M \mid R(x)=x\}$ of an antisymplectic involution $R$ of a symplectic manifold $(M,\ow)$ is Lagrangian if it is nonempty.
If $(M,\ow)$ is monotone, then every real Lagrangian of $M$ is monotone, see Lemma~\ref{lem: realmono}.
Recall that the Clifford torus $\mathbb{T}_{\Cl}=S^1\times S^1$ is a real Lagrangian torus in $S^2\times S^2$ whose antisymplectic involution $R_{\Cl}$ is given by the product of the reflection of $S^2$ fixing the equator.

The main result of this paper is to prove the Hamiltonian unknottedness of real Lagrangian tori in $S^2\times S^2$ in contrast to the monotone case.
\begin{maintheorem}
	Any real Lagrangian torus in $S^2\times S^2$ is Hamiltonian isotopic to the Clifford torus $\mathbb{T}_{\Cl}$.	
\end{maintheorem}
As we discussed, being real plays a key role, and the result shows a non-trivial phenomenon of Hamiltonian unknottedness.
An immediate consequence of the main theorem together with a known result \cite[Proposition B]{KimUniq} is the complete classification of real Lagrangian submanifolds in $S^2\times S^2$.
\begin{TheoremA}\label{thm: thmA}
Any real Lagrangian submanifold in $S^2\times S^2$ is Hamiltonian isotopic to either the antidiagonal sphere $\overline{\Delta}$ or the Clifford torus $\mathbb{T}_{\Cl}$.
\end{TheoremA}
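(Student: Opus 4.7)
The plan is to derive Theorem~\ref{thm: thmA} as a direct combination of the Main Theorem with Proposition~B of \cite{KimUniq}, after making a preliminary topological reduction on the diffeomorphism type of the real Lagrangian.

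The first step I would carry out is to show that any connected closed real Lagrangian submanifold $L \subset S^2\times S^2$ is diffeomorphic to $S^2$ or $T^2$. By Lemma~\ref{lem: realmono}, $L$ is monotone. The congruence $[L]\cdot[L] \equiv \chi(L) \pmod 2$ for a closed embedded Lagrangian surface in an oriented symplectic $4$-manifold, together with the fact that the intersection form of $S^2\times S^2$ takes only even values, forces $\chi(L)$ to be even; this excludes $\R P^2$ and every non-orientable surface of odd non-orientable genus. Shevchishin--Nemirovski's theorem excludes Lagrangian Klein bottles in $S^2\times S^2$, and related rigidity results for monotone Lagrangians in uniruled symplectic $4$-manifolds handle the remaining non-orientable cases, while on the orientable side classical obstructions to monotone Lagrangian embeddings in $S^2 \times S^2$ exclude surfaces of genus $\ge 2$.

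Once the topological reduction is established, the theorem follows immediately: if $L \cong S^2$, then Proposition~B of \cite{KimUniq} produces a Hamiltonian isotopy from $L$ to the antidiagonal sphere $\overline{\Delta}$, and if $L \cong T^2$, then the Main Theorem produces a Hamiltonian isotopy from $L$ to the Clifford torus $\mathbb{T}_{\Cl}$. The main obstacle, which motivated the present paper, was the Hamiltonian unknottedness in the torus case, now settled by the Main Theorem; the topological classification is standard once one assembles the relevant known results on Lagrangian surfaces in $S^2\times S^2$.
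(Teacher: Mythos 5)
Your proof assembles Theorem~\ref{thm: thmA} exactly as the paper does: the torus case is the Main Theorem, and the sphere case together with the fact that only $S^2$ and $T^2$ occur as real Lagrangians is \cite[Proposition B]{KimUniq}, so the core argument is correct and identical in structure. The preliminary topological reduction you sketch is redundant --- the Remark following Lemma~\ref{lem: antisymp_matrix} records that \cite[Proposition B]{KimUniq} already contains this classification --- and as an independent argument it is the weakest part of your write-up, since for a \emph{real} Lagrangian $L=\Fix(R)$ the diffeomorphism type is pinned down more cleanly by the Lefschetz fixed point theorem (which forces $\chi(L)\in\{0,2\}$ via the possibilities for $R_*$ on $H_2$) combined with Smith theory, rather than by the patchwork of general Lagrangian-embedding obstructions you invoke, some of which (e.g.\ the exclusion of monotone genus $\ge 2$ surfaces and of the remaining non-orientable types) you would still need to state and verify precisely.
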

From a real symplectic perspective, the study of real Lagrangian tori in $S^2\times S^2$ is the simplest non-trivial case in dimension 4. For topological reasons, there is no closed real Lagrangian in $\R^{2n}$ at all. Known monotone symplectic 4-manifolds containing real Lagrangian tori are $S^2\times S^2$ and the three-fold monotone blow-up of $\C P^2$, see \cite[Theorem D]{BKM}.
In \cite{KimChek}, it was proved that the Chekanov--Schlenk torus in $S^2\times S^2$ is not real, from which we believed that our main result holds. We refer to a recent work of Brendel \cite{Brendel} that extends the result in \cite{KimChek}.

In order to prove the main theorem, we employ Gromov's foliation theorem (Section~\ref{sec: Gromov}) together with the Cieliebak--Schwingenheuer criterion (Section \ref{sec: CS}).
Gromov's strategy was to foliate the monotone symplectic manifold $S^2\times S^2$ by a family of $J$-holomorphic spheres of minimal symplectic area, and hence to reduce symplectic questions to two dimensional fibered versions.
In particular, when one deforms the split complex structure $i\oplus i$ on $S^2\times S^2$ to \emph{any} tame almost complex structure $J$, two transversal foliations by $J$-holomorphic spheres still exist (even smoothly depending on $J$), and yield a symplectic $S^2$-fibration of $S^2\times S^2$ over $S^2$.
In the case of studying a monotone Lagrangian torus, the nicest situation is when a fiber symplectic sphere intersects the torus along a circle or does not intersect at all.
Cieliebak--Schwingenheuer provided a criterion for the Hamiltonian unknottedness of tori in $S^2\times S^2$ by means of this fibered structure.
In particular, the existence of two suitable symplectic sections of the fibration is the precise condition for a given monotone Lagrangian torus being Hamiltonianly deformable into the Clifford torus $\mathbb{T}_{\Cl}$.
In general, one symplectic section always exists (essentially by an SFT argument, see Proposition \ref{prop: displace}), but the second symplectic section may be missing.
For real Lagrangian tori, an antisymplectic involution will provide the second symplectic section as desired. To realize this heuristic argument precisely, we should deal with an almost complex structure that is compatible with an antisymplectic involution.

	It might be interesting to mention interactions between real symplectic topology and real algebraic varieties. We refer to \cite{realenriq, KharlamovD, KharlamovMilan} for expositions about the topology of real algebraic varieties.
	Inspired by the notion of \emph{quasi-simplicity} for real algebraic varieties~\cite{KharlamovMilan}, we formulate a counterpart in symplectic topology. A closed monotone symplectic manifold $(M,\ow)$ is called \emph{symplectically quasi-simple} if the diffeomorphism type of \emph{connected} real Lagrangian submanifolds of $M$ uniquely determines its Hamiltonian isotopy class.
	\begin{problem1}
		Show that symplectic del Pezzo surfaces are symplectically quasi-simple. 
	\end{problem1}
Notice that one has to impose a condition on the homology classes of real Lagrangians if necessary. It is known that the problem is true for $\C P^2$ \cite[Proposition A]{KimUniq} and $S^2\times S^2$ (Theorem~\ref{thm: thmA}). Note that quasi-simplicity in real algebraic varieties is known for all real del Pezzo surfaces by Degtyarev--Itenberg--Kharlamov \cite{realenriq, Kharquasi}. Together with Brendel and Moon \cite[Theorem D]{BKM}, we obtain the complete list of diffeomorphism types of \emph{connected} real Lagrangians in \emph{toric} symplectic del Pezzo surfaces. For symplectic results, we refer to the works of Evans \cite{Evans} and Seidel \cite{Seidelknotting}, which deal with the (un)knotting problems of Lagrangian spheres in del Pezzo surfaces. See also the work of Borman--Li--Wu \cite{BLW}. Finally, we mention a work of Welschinger~\cite{Welsch} which might be related to the problem. In that paper, he defines an invariant under deformation of real symplectic 4-manifolds, called \emph{Welschinger invariant}.

This paper is organized as follows. In Section \ref{sec: sec2}, we give results relevant for the proof of the main theorem. In Section \ref{sec: sec3}, we prove the main theorem.


\section{Structural results for real Lagrangian tori in $S^2\times S^2$}\label{sec: sec2}
Recall that a symplectic manifold $(M,\ow)$ is \emph{monotone} if there exists $C>0$ such that $c_1(M)=C\cdot [\ow]$ and that a Lagrangian submanifold $L$ in $M$ is \emph{monotone} if there exists $C'>0$ such that $\mu_L(\beta)=C'\cdot \ow(\beta)$ for all $\beta\in \pi_2(M,L)$, where $\mu_L\colon \pi_2(M,L)\to \Z$ denotes the Maslov class of $L$, see \cite[Definition 3.4.4]{MS}.
\begin{lemma}\label{lem: realmono}
Every real Lagrangian in a monotone symplectic manifold $(M,\ow)$ is monotone.
\end{lemma}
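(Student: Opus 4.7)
The plan is to use the standard ``doubling'' construction. Given a class $\beta\in\pi_2(M,L)$, I represent it by a smooth map $u\colon(D^2,\partial D^2)\to(M,L)$ and build a sphere $D(u)\colon S^2\to M$ by gluing $u$ with its $R$-conjugate along the common boundary, which sits inside $L=\Fix(R)$. The goal is then to compute both $\omega$ and $c_1$ on $[D(u)]$ in terms of $\omega(\beta)$ and $\mu_L(\beta)$, and invoke the monotonicity of $M$ to deduce proportionality between the latter two.

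Explicitly, view $S^2=\C\cup\{\infty\}$ and set $D(u)(z)=u(z)$ for $|z|\leq 1$ and $D(u)(z)=R(u(1/\bar z))$ for $|z|\geq 1$. Since $R|_L=\id_L$ and $1/\bar z=z$ on the equator, the two definitions agree there. The first step is the symplectic area: using $R^*\omega=-\omega$ together with the orientation reversal of $z\mapsto 1/\bar z$, both hemisphere integrals come out equal to $\omega(\beta)$, so
\begin{equation*}
\omega([D(u)])=2\,\omega(\beta).
\end{equation*}

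The second step is the Chern--Maslov identity $c_1([D(u)])=\mu_L(\beta)$. The reason it holds is that along $\partial D^2\subset L$ the bundle $TM|_L$ splits symplectically as $TL\oplus N$, where $N$ is the $(-1)$-eigenbundle of $dR$ and is automatically a Lagrangian complement to $TL$. Starting from any symplectic trivialization of $u^*TM$ adapted to this splitting on the boundary, the two hemispheres of $D(u)^*TM$ are patched by a unitary clutching loop whose degree equals the Maslov index of the Lagrangian loop $z\mapsto T_{u(z)}L$, $z\in\partial D^2$. This is the classical Welschinger/Solomon doubling identity for real Lagrangians; modulo careful bookkeeping of frames, it is the main technical input.

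Combining the two steps with the monotonicity constant $C>0$ of $M$ gives
\begin{equation*}
\mu_L(\beta)=c_1([D(u)])=C\cdot\omega([D(u)])=2C\cdot\omega(\beta),
\end{equation*}
so $L$ is monotone with constant $C'=2C$, independent of $\beta$. The only delicate point is pinning down the signs in the doubling identity above; the area computation and the concluding arithmetic are immediate from the hypotheses $R^*\omega=-\omega$ and $R|_L=\id_L$.
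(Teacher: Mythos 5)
Your proof is correct and follows essentially the same route as the paper: both use the doubling construction $\beta^\sharp$ gluing a disc with its $R$-conjugate, together with the identities $\ow(\beta^\sharp)=2\ow(\beta)$ and $c_1(\beta^\sharp)=\mu_L(\beta)$. The paper simply asserts these identities where you sketch the clutching-function argument for the Chern--Maslov one, so there is nothing to add.
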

\begin{proof}
Let $L=\Fix(R)$ be a real Lagrangian in $M$ for an antisymplectic involution $R$. Note that $S^2$ is the union of two closed discs $D_1$ and $D_2$ such that $D_2=\rho(D_1)$, where $\rho$ is an orientation reversing involution of $S^2$ whose fixed point set is $\p D_1=\p D_2$.
For a smooth map $\beta\colon (D^2,\p D^2)\to (M,L)$, its \emph{double} is defined by
$$
\beta^\sharp\colon S^2\to M,\quad \beta^\sharp(z)=\begin{cases}
	\beta(z), &z\in D_1\cong D^2, \\
	R(\beta(\rho(z))), &z\in D_2.
\end{cases} 
$$
Then one can check that $c_1(\beta^\sharp)=\mu_L(\beta)$ and $\ow(\beta^\sharp)=2\cdot \ow(\beta)$. Since $M$ is monotone, $L$ is monotone as well.
\end{proof}
Throughout this paper, the symplectic manifold $S^2\times S^2$ is equipped with the split symplectic form $\ow\oplus \ow$, where $\ow$ is a Euclidean area form on $S^2$.
This symplectic form is monotone, and hence every real Lagrangian in $S^2\times S^2$ is monotone.
\subsection{Topology of antisymplectic involutions of $S^2\times S^2$}
We fix generators of $H_2(S^2\times S^2)\cong \Z^2$,
$$
A_1=[S^2\times \{\pt\}]\quad\text{and}\quad A_2=[\{\pt\}\times S^2].
$$
The following simple topological result plays a crucial role. This result says that any  antisymplectic involution $R$ of $S^2\times S^2$ with fixed point set $\Fix(R)\cong T^2$ is \emph{homologically} the standard antisymplectic involution $R_{\Cl}$ for the Clifford torus $\mathbb{T}_{\Cl}$.
\begin{lemma}\label{lem: antisymp_matrix}
	Let $R$ be an antisymplectic involution on $S^2\times S^2$ whose fixed point set $\Fix(R)$ is diffeomorphic to $T^2$. Then the map $R_*$ induced in homology $H_2(S^2\times S^2)$ is given by $R_*A_i=-A_i$ for $i=1,2$.
\end{lemma}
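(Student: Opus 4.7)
The plan is to combine elementary constraints on the action of $R_*$ on $H_2(S^2\times S^2)$ with the Lefschetz fixed point theorem. First I would observe that although $R^*\ow=-\ow$, the square $R^*(\ow\w\ow)=\ow\w\ow$ is preserved, so $R$ is orientation-preserving in dimension $4$. Consequently $R_*$ preserves the intersection form on $H_2$, which in the basis $\{A_1,A_2\}$ is determined by $A_1\cdot A_1=A_2\cdot A_2=0$ and $A_1\cdot A_2=1$. Moreover $R_*$ reverses areas, in the sense that $\ow(R_*\beta)=-\ow(\beta)$ for every $\beta\in H_2(S^2\times S^2)$, and it satisfies $R_*^2=\id$.

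Writing $R_*A_1=aA_1+bA_2$ and $R_*A_2=cA_1+dA_2$ with integer entries, the self-intersection equalities $A_i\cdot A_i=0$ force $ab=cd=0$; the area condition $\ow(R_*A_i)=-\ow(A_i)$ (using the fact that $A_1$ and $A_2$ have equal positive area for the split form) forces $a+b=c+d=-1$; and the relation $R_*A_1\cdot R_*A_2=1$ eliminates two of the four resulting sign patterns. I would thus reduce to exactly two candidates: the diagonal involution $R_*A_i=-A_i$ for $i=1,2$, and the antidiagonal one $R_*A_1=-A_2$, $R_*A_2=-A_1$. Both satisfy $R_*^2=\id$, so the involution condition alone does not discriminate between them.

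To rule out the antidiagonal case, I would invoke the Lefschetz fixed point theorem: for a smooth involution of a closed manifold, the fixed locus is automatically a submanifold and the Lefschetz number equals its Euler characteristic. Since $H_1=H_3=0$ and $R_*$ is trivial on $H_0$ and on $H_4$ (the latter by orientation preservation), one has $L(R)=2+\tr(R_*|H_2)$. This equals $0$ in the diagonal case and $2$ in the antidiagonal case. The hypothesis $\Fix(R)\cong T^2$ yields $\chi(\Fix(R))=0$, which selects the diagonal case and proves the lemma.

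I do not anticipate any serious obstacle here; the only point worth flagging is the identification of $L(R)$ with $\chi(\Fix(R))$, which is standard for smooth $\Z/2$-actions since the slice theorem guarantees that the fixed locus is a clean submanifold whose normal action is fixed-point-free.
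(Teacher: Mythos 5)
Your proof is correct and follows essentially the same route as the paper's: use orientation-preservation to get invariance of the intersection form, use the antisymplectic condition (equivalently, that $R_*$ negates the functional $\ow(\cdot)$ on $H_2$) to pin $R_*$ down to the diagonal matrix $-I$ or the antidiagonal one, and then apply the Lefschetz fixed point theorem with $\chi(T^2)=0$ to exclude the antidiagonal case. The only cosmetic difference is that the paper packages the area constraint as $R_*(A_1+A_2)=-(A_1+A_2)$ and uses $A_i\bullet A_i=0$, while you impose $\ow(R_*A_i)=-\ow(A_i)$ for each $i$ together with $R_*A_1\bullet R_*A_2=1$; these yield the same two candidates.
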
	
\begin{proof}
Write $R_*=\begin{pmatrix}
	a_1 & a_2\\
	b_1 & b_2
\end{pmatrix}$ for the induced map of $R$ on $H_2(S^2\times S^2)\cong \Z^2$. It  satisfies $R_*^2=\id$ and 
\begin{equation*}
R_*(A_1+A_2)=-(A_1+A_2) \iff \begin{pmatrix}
	a_1+a_2 \\
	b_1+b_2
\end{pmatrix}=\begin{pmatrix}
	-1 \\
	-1
\end{pmatrix}.
\end{equation*}
The second condition holds since $R$ is antisymplectic. Since $R$ is orientation-preserving, $R_*$ preserves the intersection form of $S^2\times S^2$. In particular, for $i=1,2$ we obtain 
\begin{align*}
	0 &= A_i\bullet A_i = R_*A_i\bullet R_*A_i = \begin{pmatrix}
		a_i \\
		b_i
	\end{pmatrix} \bullet \begin{pmatrix}
		a_i \\
		b_i
	\end{pmatrix}=2a_ib_i.
\end{align*}
Hence, $R_*$ must be either $\begin{pmatrix}
	-1 & 0 \\
	0 & -1 
\end{pmatrix}$ or $\begin{pmatrix}
	0 & -1 \\
	-1 & 0
\end{pmatrix}$, and the same result holds for $H_2(S^2\times S^2;\Q)$. The Lefschetz fixed point theorem \cite[Section 1.3]{realenriq} implies that
\begin{align*}
	\chi(\Fix(R)) &= \sum_{i=0}^4(-1)^i\tr\Big[R_*\colon H_i(S^2\times S^2;\Q)\to H_i(S^2\times S^2;\Q)\Big] \\	
&=	2+\tr\Big[R_*\colon H_2(S^2\times S^2;\Q)\to H_2(S^2\times S^2;\Q)\Big].
\end{align*}
Since $\chi(T^2)=0$, the lemma follows.
\end{proof}

\begin{remark}
By the above proof, for given antisymplectic involution $R$ of $S^2\times S^2$ with possibly empty fixed point set, the induced map $R_*$ on $H_2(S^2\times S^2)$ is either
$$
I_1=\begin{pmatrix}
	-1 & 0 \\
	0 & -1 
\end{pmatrix}\quad \text{or}\quad I_2=\begin{pmatrix}
	0 & -1 \\
	-1 & 0
\end{pmatrix}.
$$
It is known that any real Lagrangian in $S^2\times S^2$ is diffeomorphic to either $T^2$ or $S^2$, see \cite[Proposition B]{KimUniq}. Hence, if $\Fix(R)$ is nonempty we obtain
		\begin{itemize}
		\item $\Fix(R)$ is diffeomorphic to $T^2$ if and only if $R_*=I_1$.
		\item $\Fix(R)$ is diffeomorphic to $S^2$ if and only if $R_*=I_2$.
	\end{itemize}
\end{remark}
\subsection{Gromov's foliation theorem}\label{sec: Gromov}
We recall the celebrated Gromov foliation theorem in $S^2\times S^2$, see \cite[Theorem~2.4.$A_1$]{Gromov}.
Let $\mathcal{J}$ denote the \emph{space of compatible almost complex structures} on $S^2\times S^2$.
We emphasize that for the following result it is crucial that the symplectic form $\omega \oplus \omega$ is monotone.
\begin{theorem}[Gromov]\label{thm: gromov}
For every $J\in \mathcal{J}$ there exist two transversal foliations $\mathcal{F}_1$ and $\mathcal{F}_2$ of $S^2\times S^2$ whose leaves are (unparametrized) embedded $J$-holomorphic spheres in the homology class $A_1$ and $A_2$, respectively.
\end{theorem}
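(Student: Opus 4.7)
The plan is to apply Gromov's pseudoholomorphic curves technique: deform the product structure $J_0=i\oplus i$ (for which both foliations are the obvious product fibrations) to the given $J\in\mathcal{J}$, while tracking the moduli space $\mathcal{M}(A_i,J)$ of unparametrized $J$-holomorphic spheres in the primitive class $A_i$. The three analytic pillars will be compactness of this moduli space, automatic regularity, and positivity of intersections, while the topological backbone is a cobordism/degree argument for the evaluation map.

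The first step is compactness and smoothness of $\mathcal{M}(A_i,J)$. Compactness follows from monotonicity: since $\omega(A_1)=\omega(A_2)=1$ is the minimal positive symplectic area on $S^2\times S^2$, any Gromov limit splitting $A_i=B+C$ into two nontrivial $J$-holomorphic components would force $\omega(B),\omega(C)\ge 1$, contradicting $\omega(A_i)=1$; hence no bubbling can occur. Regularity comes from the Hofer--Lizan--Sikorav automatic transversality theorem in dimension four: every sphere in the primitive class $A_i$ is automatically simple, and $c_1(A_i)=2>0$ guarantees surjectivity of the linearized Cauchy--Riemann operator at every $u\in\mathcal{M}(A_i,J)$. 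Consequently $\mathcal{M}(A_i,J)$ is a smooth closed manifold of virtual dimension $2c_1(A_i)+2n-6=2$.

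Next, I would run a deformation argument for the evaluation map
\[
\ev\colon \widetilde{\mathcal{M}}_1(A_i,J):=\bigl(\mathcal{M}^*(A_i,J)\times S^2\bigr)\big/PSL(2,\C)\longrightarrow S^2\times S^2,
\]
between closed oriented $4$-manifolds, where $\mathcal{M}^*(A_i,J)$ denotes the parametrized moduli space. At $J_0$ the map is manifestly a diffeomorphism. Along a generic path $\{J_t\}_{t\in[0,1]}$ connecting $J_0$ to $J$, the compactness and regularity of the previous step persist, yielding a smooth cobordism of $4$-manifolds with compatible evaluation maps, so $\deg \ev=1$ for every $t$. Positivity of intersections forces $\ev$ to be injective throughout, since two distinct spheres in class $A_i$ would have nonnegative intersection contradicting $A_i\bullet A_i=0$; hence $\ev$ is a bijective local diffeomorphism between closed $4$-manifolds, and thus a diffeomorphism. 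The leaves of $\mathcal{F}_i$ are then the images of the spheres parametrized by $\mathcal{M}(A_i,J)$.

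Finally, since $A_1\bullet A_2=1$, positivity of intersections forces a leaf of $\mathcal{F}_1$ and a leaf of $\mathcal{F}_2$ to meet in exactly one transverse point, which shows that the two foliations are transversal. The main obstacle will be ensuring that the combined compactness and regularity input is uniform enough along the entire path $\{J_t\}$ to keep $\ev$ a diffeomorphism; once those analytic inputs are secured, the foliation and transversality conclusions essentially follow from positivity of intersections.
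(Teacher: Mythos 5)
The paper does not prove this theorem; it is quoted directly from Gromov (and McDuff's work on rational and ruled surfaces), so there is no internal proof to compare against. Your sketch is the standard and correct argument: minimality of $\omega(A_i)$ rules out bubbling, adjunction gives embeddedness, Hofer--Lizan--Sikorav gives automatic regularity for every $J$ (so in fact no genericity of the path $\{J_t\}$ is needed), and positivity of intersections yields both the foliation property and the transversality of $\mathcal{F}_1$ and $\mathcal{F}_2$. The only step you state without justification is upgrading the injective degree-one evaluation map to a diffeomorphism: surjectivity of $d\ev$ at $(u,z)$ is not a consequence of injectivity plus invariance of domain, but follows from the fact that the normal bundle of $u$ has degree $A_i\bullet A_i=0$, so by the similarity principle a nonzero element of the kernel of the normal Cauchy--Riemann operator is nowhere vanishing, making the evaluation of that two-dimensional kernel at $z$ an isomorphism onto the normal fibre; with that supplied, your argument is complete.
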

Here, by \emph{transversal} foliations $\mathcal{F}_1$ and $\mathcal{F}_2$, we mean that any two leaves of $\mathcal{F}_1$ and $\mathcal{F}_2$ are transverse. By positivity of intersections \cite[Section 2.6]{McSalJholo}, the two leaves intersects transversely at a single point.
\begin{remark}\label{rem: adj}\
\begin{enumerate}
	\item[(i)] Actually, Theorem \ref{thm: gromov} holds for \emph{tame} almost complex structures.
	\item[(ii)]  By topological argument as in \cite[Remark 2.6 (a)]{CS}, any smooth $F$-fibration of $S^2\times S^2$ over a closed surface $B$ must satisfy $B\cong F\cong S^2$. Gromov's foliations yield plenty of symplectic $S^2$-fibrations of $S^2\times S^2$ over $S^2$ \cite[Proposition 4.1]{McduffRational}. See also Example \ref{ex: cliff} for an easy illustration.
	\item[(iii)] The embeddeness of the leaves of a Gromov's foliation follows from the adjunction inequality, see \cite[Theorem 2.6.4]{McSalJholo}. More precisely, any $J$-holomorphic sphere of $S^2\times S^2$ in the homology class $A_i$ is embedded.
\end{enumerate}
\end{remark}
\subsection{The neck-stretching for a real Lagrangian torus}
Let $R$ be an antisymplectic involution of $S^2\times S^2$. A compatible almost complex structure $J$ on $S^2\times S^2$ is called \emph{$R$-anti-invariant} if $J=-R^*J$.
We abbreviate by $\mathcal{J}_R$ the \emph{space of $R$-anti-invariant compatible almost complex structures} on $S^2\times S^2$, which is nonempty and contractible, see~\cite[Proposition 1.1]{Welsch}.

The following is an application of a neck-stretching argument combined with Gromov's theorem, which is a version of Dimitroglou Rizell--Goodman--Ivrii \cite[Theorem~C]{DGI}.
See also a related result of Welschinger \cite[Theorem 1.3]{Wel_Eff}.
 \begin{proposition}\label{prop: displace}
Let $L=\Fix(R)$ be a real Lagrangian torus in $S^2\times S^2$ for an antisymplectic involution $R$. Then there exists $J\in \mathcal{J}_R$ and a $J$-holomorphic sphere in $S^2\times S^2$ which represents the homology class $A_1$ and is disjoint from $L$.
\end{proposition}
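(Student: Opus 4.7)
The strategy is to adapt the neck-stretching proof of Dimitroglou Rizell--Goodman--Ivrii \cite[Theorem C]{DGI} to the real setting. The whole construction must be carried out $R$-equivariantly so that the resulting almost complex structure lies in $\mathcal{J}_R$; once this is arranged, the rest of the argument is insensitive to the real structure and follows essentially verbatim from the non-real case.

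First I would build an $R$-invariant Weinstein neighborhood of $L$. Because $R^2=\id$ and $L=\Fix(R)$, the differential of $R$ acts on the symplectic normal bundle of $L$ as fiberwise multiplication by $-1$, so averaging the standard Weinstein chart against $R$ produces an $R$-equivariant symplectomorphism from a neighborhood $U$ of $L$ onto a neighborhood of the zero section of $T^*L$, intertwining $R|_U$ with the involution $(q,p)\mapsto(q,-p)$. Choosing an $R$-invariant Riemannian metric on $L$ (again by averaging) yields an $R$-invariant contact form on the unit cotangent bundle $Y=ST^*L$, along which I would then perform the SFT neck-stretching of \cite{BEHWZ}. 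At every stretching parameter $n$, the chosen compatible almost complex structure $J_n$ can be averaged with respect to $R$ without destroying compatibility, so that $J_n\in\mathcal{J}_R$.

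For each $n$, Theorem~\ref{thm: gromov} supplies a foliation $\mathcal{F}_1^n$ of $S^2\times S^2$ by embedded $J_n$-holomorphic spheres in class $A_1$. If some leaf of some $\mathcal{F}_1^n$ is disjoint from $L$, I am done immediately. Otherwise, fix a point $p_0\in (S^2\times S^2)\setminus L$ and let $u_n$ be the unique leaf of $\mathcal{F}_1^n$ through $p_0$, so that $u_n$ meets $L$ for every $n$. SFT compactness \cite{BEHWZ} then yields, along a subsequence, a pseudo-holomorphic building $\mathbf{u}$ with top level in the completion of $(S^2\times S^2)\setminus L$ and a non-empty bottom level in the completion of $T^*L$, of total $\ow$-area equal to $\ow(A_1)$. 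Because $T^*L$ is exact and aspherical and $L$ is a monotone Lagrangian torus of minimal Maslov number $2$ (Lemma~\ref{lem: realmono}), the positive-end Reeb orbits of any bottom component correspond to non-zero classes in $H_1(L)\cong\Z^2$ that must sum to zero, and the action--index bookkeeping of \cite[Theorem C]{DGI} rules out such a building of total area $\ow(A_1)$, producing the desired contradiction.

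The main obstacle is the careful $R$-equivariant implementation of the above: every auxiliary choice---Weinstein chart, contact form, neck almost complex structure, and any perturbation needed for transversality of the Gromov foliation---has to be made $R$-symmetric without spoiling the properties required by \cite{BEHWZ} and \cite{DGI}. The role of the real structure is essentially book-keeping: it only enters to certify $J_n\in\mathcal{J}_R$, while the analytic heart of the argument, namely the exclusion of a non-trivial bottom level, depends only on $L$ being a monotone Lagrangian torus and is imported from \cite{DGI}.
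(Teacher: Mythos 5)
Your equivariant setup (the $R$-invariant Weinstein neighborhood, the $R$-anti-invariant neck-stretching family) matches the paper's construction, but the proof breaks down at the decisive step: the claim that ``action--index bookkeeping'' rules out a broken building of total area $\ow(A_1)$ is false. The index of a punctured sphere $u$ in $S^2\times S^2\setminus L$ with $\ell$ negative punctures is $\ind(u)=-2+\ell+\mu_L(\overline{u})$, so a plane ($\ell=1$) with $\mu_L(\overline{u})=2$ has index exactly $1$; two such planes, possibly glued to a cylinder in $T^*L$, satisfy the building identity $\sum_i\ind(v_i)-\sum_j\chi(w_j)=2$ and have total area $\ow(A_1)$. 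Such degenerations genuinely occur --- they are exactly what happens to a leaf meeting $L$ in a circle (e.g.\ for the Clifford torus itself) --- so the limit building through your fixed $p_0$ may perfectly well be broken, and no contradiction arises.

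The paper's argument does not exclude broken buildings; it shows they are \emph{rare}. One must first choose $J_\infty$ regular for simple punctured planes in $S^2\times S^2\setminus L$ (Lemma~\ref{lem: equitrans}), so that the moduli space of simple index-$1$ planes is a $1$-manifold; the top-level components of all broken limit buildings then sweep out a subset of dimension at most $3$, and since limit buildings pass through every point of the $4$-manifold, some limit must be non-broken, i.e.\ an embedded $J_\infty$-holomorphic sphere in class $A_1$ disjoint from $L$ (one then modifies $J_\infty$ near $L$ to obtain an honest $J\in\mathcal{J}_R$ on all of $S^2\times S^2$). This is also where the real structure is far from ``mere book-keeping'': equivariant transversality is in general obstructed, and the paper must prove that generic perturbation within $\mathcal{J}^{\cyl}_R(S^2\times S^2\setminus L)$ achieves regularity, using the observation that a simple plane in the complement of $L$ can never be $R$-invariant (an $R$-invariant one would be forced to meet $\Fix(R)=L$). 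Your proposal omits both the regularity statement and the dimension count, and these are precisely the ingredients that make the proof work.
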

The rest of this section is devoted to the proof of this statement.
Below, we mainly follow the descriptions given in \cite[Sections~2, 3, and 4]{DGI}. \vspace{-0.3cm}

 \paragraph{\it The real splitting construction.}
We explain the splitting construction for a real Lagrangian torus $L=\Fix(R)$ in $S^2\times S^2$, which matches well with the antisymplectic involution $R$.
We refer to \cite[Section~2]{DGI}, \cite[Example~2.5]{CM} or \cite[Example~1.3.1]{EGH} for the split construction for a Lagrangian.
 
 Fix a flat metric on the torus $L$ and write $(\boldsymbol{\theta},{\bf p})=(\theta_1,\theta_2,p_1,p_2)\in T^*L\cong T^2\times \R^2$ for the coordinates.
 Let $\lambda_{\can}=p_1d\theta_1+p_2d\theta_2$ be the Liouville form on $T^*L$.
 The cotangent bundle $(T^*L,d\lambda_{\can})$ carries the canonical antisymplectic involution
 $R_{\can}(\boldsymbol{\theta},{\bf p})=(\boldsymbol{\theta},-{\bf p})$, 
 which is \emph{exact}, i.e., $R_{\can}^*\lambda_{\can}=-\lambda_{\can}$, and is an isometry. 
 This map restricts to the (strict) anticontact involution $R_{\can}|_{S^*L}$ on the unit cotangent bundle $S^*L$ equipped with the contact form ${\alpha}=\lambda_{\can}|_{S^*L}$. This involution extends to the exact antisymplectic involution on the symplectization $(\R\times S^*L,d(e^t\alpha))$~by
 $$
 R_{\can}^{\cyl}(t,\boldsymbol{\theta},{\bf p}) = (t,\boldsymbol{\theta},-{\bf p}).
 $$
For $r>0$ we let $T^*_r L=\{(\boldsymbol{\theta},{\bf p})\in T^*L \mid \|{\bf p}\|\le r\}$.
By the equivariant Weinstein neighborhood theorem \cite[Theorem 2]{Meyer}, there exists a symplectic embedding
\begin{equation}\label{eq: symp_emb}
\Psi\colon (T^*_{4\epsilon}L,d\lambda_{\can}) \longhookrightarrow (S^2\times S^2,\ow\oplus \ow)	
\end{equation}
such that $\Psi({\bf 0}_L)=L$ and $R\circ \Psi=\Psi \circ R_{\can}$. Together with the exact symplectomorphism
\begin{equation}\label{eq: identification}
(\R\times S^*L, d(e^t\alpha)) \overset{\cong}{\longrightarrow} (T^*L\setminus {\bf 0}_{L},d\lambda_{\can}),\qquad (t,\boldsymbol{\theta},{\bf p}) \longmapsto (\boldsymbol{\theta}, e^t{\bf p}),
\end{equation}
which identifies the antisymplectic involutions $R_{\can}^{\cyl}$ and $R_{\can}$, we see that $S^2\times S^2\setminus L$ and $T^*L$ have a cylindrical end over the \emph{real} contact manifold $(S^*L,\alpha,R_{\can}|_{S^*L})$.
Therefore, we obtain the \emph{real split symplectic manifold} consisting of 
\begin{equation}\label{eq: split_symp_mfd}
	\left\{\begin{alignedat}{11}
		&(S^2\times S^2\setminus L,\; && \ow\oplus \ow,\; && R,\quad\; && J_{\infty}&&) \qquad\qquad&& \text{(Top level)} \\
		&(\R\times S^*L, && d(e^t\alpha), && R_{\can}^{\cyl}, && J_{\cyl}&&) && \text{(Middle level)}\\
		&(T^*L, && d\lambda_{\can}, && R_{\can}, && J_{\std}&&) && \text{(Bottom level)}
	\end{alignedat}\right.
\end{equation}
endowed with the $R$-anti-invariant almost complex structures $J_{\infty}$, $J_{\cyl}$, and $J_{\std}$, which are explained below.\vspace{-0.3cm}
\paragraph{\it A neck-stretching family of $R$-anti-invariant almost complex structures.} A compatible almost complex structure $J$ on the symplectization $(\R\times S^*L,d(e^t\alpha))$ is called \emph{cylindrical} if $J$ is $\R$-translation invariant, $J(\p_t)=\mathcal{R}_{\alpha}$, and $J(\ker \alpha)=\ker\alpha$.
Here, $\mathcal{R}_\alpha$ is the \emph{Reeb vector field} on $(S^*L,\alpha)$ uniquely determined by the equations $d\alpha(\mathcal{R}_{\alpha},\cdot)=0$ and $\alpha(\mathcal{R}_{\alpha})=1$.
We recall the construction in \cite[Section~4]{DGI} of the specific almost complex structures on the split symplectic manifold \eqref{eq: split_symp_mfd}.
We refer to  \cite[Lemma~4.1]{DGI} for details.

Using the identification \eqref{eq: identification}, we define the cylindrical compatible almost complex structure $J_{\cyl}$ on $(\R\times S^*L, d(e^t\alpha))\cong (T^*L\setminus {\bf 0}_L,d\lambda_{\can})$ by
\begin{align*}
J_{\cyl}\p_{\theta_i} &= - \|{\bf p}\|\p_{p_i},\, \qquad \text{for $(\boldsymbol{\theta},{\bf p})\in T^*L\setminus {\bf 0}_L \cong \R\times S^*L$\; and\; $i=1,2$}. \\
\intertext{We consider the \emph{tame} almost complex structure $J_{\std}$ on $(T^*L,d\lambda_{\can})$ given by}
J_{\std}\p_{\theta_i} &=-f(\|{\bf p}\|)\p_{p_i},\quad \text{for $(\boldsymbol{\theta}, {\bf p})\in T^*L$\; and\; $i=1,2$},	
\end{align*}
where $f\colon [0,\infty) \to [\epsilon,\infty)$ is a smooth function such that
\begin{itemize}
	\item $f'(t)\ge 0$ for $t\ge 0$,
	\item $f(t)=\epsilon$ for $0\le t\le \epsilon$, and
	\item $f(t)=t$ for $t\ge 2\epsilon$.
\end{itemize}
Here, $\epsilon>0$ is chosen such that \eqref{eq: symp_emb} exists.
On $T^*L\setminus T^*_{2\epsilon}L$, the almost complex structure $J_{\std}$ agrees with the cylindrical almost complex structure $J_{\cyl}$.
We can readily check that $J_{\std}$ and $J_{\cyl}$ are \emph{$R$-anti-invariant}, i.e.,
$$
J_{\std}=-R_{\can}^*J_{\std}, \qquad J_{\cyl} = - (R_{\can}^{\cyl})^*J_{\cyl}.
$$
Take the neighborhood $U:=\Psi(T^*_{4\epsilon}L)$ of $L$ in $S^2\times S^2$, where $\Psi\colon T^*_{4\epsilon}L\hookrightarrow S^2\times S^2$ is the symplectic embedding from \eqref{eq: symp_emb}.
Let $\mathcal{J}_R(S^2\times S^2\setminus L)$ be the \emph{space of $R$-anti-invariant compatible almost complex structures} on $S^2\times S^2\setminus L$. The set
$$
\mathcal{J}^{\cyl}_R(S^2\times S^2\setminus L) = \{J_\infty\in \mathcal{J}_R(S^2\times S^2\setminus L) \mid \text{$\Psi^*J_\infty=J_{\cyl}$ on $U\setminus L$}\},
$$
is nonempty and contractible.

Suppose that we are given $J_\infty\in \mathcal{J}_R^{\cyl}(S^2\times S^2\setminus L)$.
A \emph{neck-stretching family} $\{J_\tau\}_{\tau\ge 0}$ in \cite[Section 2.5]{DGI} of $R$-anti-invariant tame almost complex structures $J_\tau$ on $S^2\times S^2$ is defined by
$$
J_\tau=\begin{cases}
	J_\infty & \text{on $S^2\times S^2\setminus \Psi(T^*_{4\epsilon}L)$}, \\
	\Phi_\tau^*J_{\cyl} & \text{on $\Psi(T_{4\epsilon}^*L\setminus T_{2\epsilon}^*L)$},\\
	J_{\std} & \text{on $\Psi(T^*_{2\epsilon}L)$},
\end{cases}
$$
where 
\begin{itemize}
	\item The map $\Phi_\tau$ is a diffeomorphism
\begin{align*}
\Phi_\tau\colon [\log 2\epsilon ,\log 4\epsilon)\times S^*L &\longrightarrow [\log 2\epsilon ,\log 4\epsilon +\tau)\times S^*L\\
(t,\boldsymbol{\theta},{\bf p}) &\longmapsto (\phi_\tau(t),\boldsymbol{\theta},{\bf p}),
\end{align*}
induced by an orientation-preserving diffeomorphism
$$
\phi_\tau\colon [\log 2\epsilon,\log 4\epsilon) \longrightarrow [\log 2\epsilon, \log 4\epsilon + \tau).
$$
	\item The identifications \eqref{eq: symp_emb} and \eqref{eq: identification} are used in $J_{\std}$ and $\Phi_\tau^*J_{\cyl}$.
\end{itemize}
\begin{remark}
In the limit $\tau\to \infty$ of the quadruple $(S^2\times S^2,\ow\oplus\ow,R,J_\tau)$ we obtain the real split symplectic manifold \eqref{eq: split_symp_mfd}, see for instance \cite[Section~2.7]{CMcompact} or \cite[Section~3.4]{BEHWZ}.
\end{remark}
We are now in position to prove Proposition~\ref{prop: displace}. The SFT analysis that we will use is carried out in \cite[Sections~2 and 3]{DGI} and we closely follow their arguments adapted to our purpose.
\begin{proof}[Proof of Proposition~\ref{prop: displace}]
Pick a regular $J_\infty\in \mathcal{J}_R^{\cyl,\, \reg}(S^2\times S^2\setminus L)$ as in Lemma~\ref{lem: equitrans} and consider the associated neck-stretching family $\{J_\tau\}_{\tau\ge 0}$ of $R$-anti-invariant tame almost complex structures $J_\tau$ on $S^2\times S^2$.
It follows from Gromov's result together with positivity of intersections that for any $\tau\ge 0$ there exists a unique embedded $J_\tau$-holomorphic sphere in the homology class $A_1$ passing through any given point in $S^2\times S^2$.
We can therefore apply the SFT compactness theorem \cite[Theorem 2.2]{DGI} or \cite{BEHWZ,CMcompact} to these spheres to obtain a limit holomorphic building in the real split symplectic manifold $(S^2\times S^2\setminus L, \ow\oplus \ow,R)\sqcup (T^*L,d\lambda_{\can},R_{\can})$ given in \eqref{eq: split_symp_mfd}.
Note that the limit holomorphic building possibly consists of components in the middle levels $(\R\times S^*L)\sqcup \cdots \sqcup (\R\times S^*L)$.
By convexity and exactness of $(T^*L,d\lambda_{\can})$ and $(\R\times S^*L,d(e^t\alpha))$, the limit holomorphic building must contain at least one component in the top level $S^2\times S^2\setminus L$.
\medskip\\
{\bf Claim.} \emph{If a limit holomorphic building is broken, then its top level consists of two simple $J_\infty$-holomorphic planes in $S^2\times S^2\setminus L$ of index 1.}

\medskip In \cite[Proposition~3.5]{DGI}, a similar version of the claim is obtained under the choice of a regular $J_\infty\in \mathcal{J}^{\cyl}(S^2\times S^2\setminus L)$ for \emph{all} simple punctured $J_\infty$-holomorphic spheres in $S^2\times S^2\setminus L$, where $L$ is not necessarily monotone.
The generic choice of $J_\infty$ was crucially required to guarantee the index non-negativity result for punctured spheres in $S^2\times S^2\setminus L$.
In our case, the monotonicity of $L$ controls the index of punctured spheres.\medskip\\
\textit{\textbf{Step 1.}} \emph{Any punctured $J_\infty$-holomorphic sphere in $S^2\times S^2\setminus L$ satisfies $\ind(u)\ge 1$.}

Fixing a symplectic trivialization $\Phi$ of the contact structure $\ker \alpha$ on $S^*L$, we denote by $\CZ^\Phi(\Gamma)$ the Conley--Zehnder index of a Morse--Bott manifold $\Gamma$ of periodic Reeb orbits in $S^*L$ and by $c_{1,\rel}^\Phi(\cdot )$ the relative first Chern number, see \cite[Section~3.1]{DGI}.
By \cite[Equations~(1) and (2) in Section~3.1]{DGI}, the index of a punctured $J_\infty$-holomorphic sphere $u$ in $S^2\times S^2\setminus L$ having $\ell\ge 1$ negative punctures asymptotic to periodic orbits in Morse--Bott manifolds $\Gamma_1,\dots,\Gamma_\ell$ is given by
\begin{align*}
	\ind(u) &= -2+\ell -\sum_{i=1}^{\ell}(\CZ^{\Phi}(\Gamma_i^-)-1) + 2c_{1,\rel}^{\Phi}(u) \\
	&= -2 + \ell +\mu_L(\overline{u}),
\end{align*}
where $\overline{u}$ is a surface in $S^2\times S^2$ with boundary on $L$ which is the boundary compactification of $u$ obtained by adding (geodesic) circles in $L$ corresponding to the asymptotic orbits.
Since $L$ is orientable, $\overline{u}$ has Maslov number $\mu_L(\overline{u})\in 2\Z$.
By the monotonicity of $L$ together with the fact that $u$ is non-constant, we have $\mu_L(\overline{u})\ge 2$. Hence, we deduce that
$$
\ind(u)=-2+\ell+\mu_L(\overline{u})\ge 1,
$$
which completes Step 1.\medskip\\
\textit{\textbf{Step 2.}} \emph{Proof of the claim.}

This essentially follows from the proof of \cite[Proposition 3.5]{DGI}. Indeed, that proof yields the identity
$$
\sum_{i=1}^{N_1}\ind(v_i)-\sum_{i=1}^{N_2}\chi(w_i)=2,
$$
where $v_1,\dots,v_{N_1}$ and $w_1,\dots, w_{N_2}$ denote the components of the limit holomorphic building in the top level $S^2\times S^2\setminus L$ and the remaining levels, respectively.
Here, $\chi(w_i)$ is the Euler characteristic of the domain of $w_i$.
Since there are no contractible periodic orbits in $S^*L$, a pseudoholomorphic \emph{plane} in $T^*L$ or $\R\times S^*L$ asymptotic to a periodic orbit cannot exist, which shows $\chi(w_i)\le 0$.
By Step 1, we know that $\ind(v_i)\ge 1$.
Since the components of the limit holomorphic building in the split symplectic manifold $(S^2\times S^2\setminus L)\sqcup T^*L$ glue together to form a sphere, we conclude that the top level of the limit holomorphic building consists of two planes of index 1. 
The simplicity of the plane follows from \cite[Lemmata~3.3 and 3.4]{DGI}. Hence, the claim follows. \medskip

We now crucially use that our $J_\infty \in \mathcal{J}_R^{\cyl,\, \reg}(S^2\times S^2\setminus L)$ is regular.
By the claim, if the limit building is broken, then its top level consists of two simple $J_\infty$-holomorphic planes in $S^2\times S^2\setminus L$ of index 1.
Such planes are contained in the moduli space of index 1 simple $J_\infty$-holomorphic planes in $S^2\times S^2\setminus L$, which is a smooth manifold of dimension 1.
Since the total collection of components of \emph{broken} limit buildings in the top level $S^2\times S^2\setminus L$ forms a subset of dimension at most 3 in $S^2\times S^2\setminus L$, we conclude that there must be a \emph{non-broken} limit building, that is an embedded $J_\infty$-holomorphic sphere $u$ in $S^2\times S^2\setminus L$ whose homology class represents $A_1$.
It remains to show that $u$ can be seen as a $J$-holomorphic sphere in $S^2\times S^2$ for some $J\in \mathcal{J}_R$.
Since the image of $u$ is a compact set in $S^2\times S^2\setminus L$, we can choose an $R$-invariant neighborhood of $L$ in $S^2\times S^2$ which is disjoint from $u$. 
We then modify $J_\infty$ on this neighborhood so that the resulting one can be extended to an $R$-anti-invariant compatible almost complex structure $J$ defined on $S^2\times S^2$, namely $J\in \mathcal{J}_R$. This completes the proof.
\end{proof}
Following ideas of \cite[Section 3]{KimChek}, we show the equivariant transversality for simple pseudoholomorphic planes in $S^2\times S^2\setminus L$ asymptotic to a periodic orbit. 
\begin{lemma}\label{lem: equitrans}
	There exists a Baire subset $\mathcal{J}_R^{\cyl,\, \reg}(S^2\times S^2\setminus L)\subset \mathcal{J}_R^{\cyl}(S^2\times S^2\setminus L)$ which has the property that every $J_\infty\in \mathcal{J}_R^{\cyl,\, \reg}(S^2\times S^2\setminus L)$ is regular for simple $J_\infty$-holomorphic planes in $S^2\times S^2\setminus L$ asymptotic to a periodic orbit in $S^*L$.
\end{lemma}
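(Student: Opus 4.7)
The plan is to carry out a standard universal moduli space and Sard--Smale argument adapted to the equivariance constraint $R^*J_\infty=-J_\infty$, following \cite[Section~3]{KimChek}. First I would present $\mathcal{J}_R^{\cyl}(S^2\times S^2\setminus L)$ as a smooth Banach manifold whose tangent space at $J_\infty$ consists of endomorphisms $Y$ of $T(S^2\times S^2)$ supported outside the fixed cylindrical neighborhood $\Psi(T^*_{2\epsilon}L)$ and satisfying $YJ_\infty+J_\infty Y=0$, the analogous $\omega$-compatibility condition, and the equivariance relation $R^*Y=-Y$. Then I would introduce the universal moduli space $\mathcal{M}^{\mathrm{univ}}$ of pairs $(J_\infty,u)$ with $u$ a simple finite-energy $J_\infty$-holomorphic plane in $S^2\times S^2\setminus L$ asymptotic to a periodic Reeb orbit of $S^*L$, modelled on the weighted Sobolev spaces adapted to the Morse--Bott end as in \cite[Section~3]{DGI}. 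Once the universal linearized operator $\mathcal{D}^{\mathrm{univ}}$ is shown to be surjective at every such pair, the desired $\mathcal{J}_R^{\cyl,\,\reg}(S^2\times S^2\setminus L)$ arises as the Baire set of regular values of the projection $\mathcal{M}^{\mathrm{univ}}\to \mathcal{J}_R^{\cyl}(S^2\times S^2\setminus L)$, after the standard stratification by asymptotic orbit type.

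The only non-routine step is the surjectivity of $\mathcal{D}^{\mathrm{univ}}$ at a pair $(J_\infty,u)$. My plan is to exploit that, because $J_\infty$ is $R$-anti-invariant, the anti-holomorphic reflection $\tilde u(z):=R(u(\bar z))$ is again a simple $J_\infty$-holomorphic plane, with image equal to $R(u(\C))$. Provided there exists an injective point $z_0\in \C$ of $u$ with $u(z_0)\notin R(u(\C))\cup \Psi(T^*_{2\epsilon}L)$, any non-equivariant infinitesimal perturbation $Y_0$ supported in a small neighborhood $V$ of $u(z_0)$ that avoids $R(u(\C))\cup \Psi(T^*_{2\epsilon}L)$ can be anti-symmetrized to $Y:=\tfrac{1}{2}(Y_0-R^*Y_0)$, which lies in the admissible tangent space, is supported in the disjoint union $V\sqcup R(V)$, and agrees with $Y_0$ on $V$. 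The classical pointwise argument of \cite[Chapter~3]{McSalJholo} then gives surjectivity of $\mathcal{D}^{\mathrm{univ}}$ at $(J_\infty,u)$.

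The main obstacle is producing such a good injective point, equivalently ruling out the coincidence $u(\C)=R(u(\C))$. I would argue by contradiction: if the two images agreed then, since both $u$ and $\tilde u$ are simple curves with the same image, there would exist a biholomorphism $\phi$ of $\C$ with $\tilde u = u\circ\phi$, and the anti-holomorphism $\psi:=\phi\circ c$ (with $c(z)=\bar z$) would satisfy $R\circ u=u\circ\psi$. Iterating and using $R^2=\id$ yields $u=u\circ\psi^2$; since $\psi^2$ is a biholomorphism of $\C$ and $u$ is simple, this forces $\psi^2=\id$, so $\psi$ is an anti-holomorphic involution of $\C$ of the form $\psi(z)=a\bar z+b$ with $|a|=1$ and $a\bar b+b=0$. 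An elementary computation shows that any such involution possesses a fixed point $z_*\in\C$, and then $u(z_*)=R(u(z_*))\in\Fix(R)=L$ contradicts $u(\C)\subset S^2\times S^2\setminus L$. Combined with the density of injective points for a simple curve, this produces the required $z_0$ and completes the argument.
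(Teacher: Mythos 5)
Your proposal is correct and follows essentially the same route as the paper: the crux in both is ruling out $\im u=R(\im u)$ by showing that the resulting anti-holomorphic reparametrization would be an involution of $\C$, whose fixed point would force $u$ to meet $\Fix(R)=L$; given non-invariance, the equivariant Sard--Smale argument with perturbations anti-symmetrized over $V\sqcup R(V)$ (and supported away from the fixed cylindrical neighborhood, which is permissible by the maximum principle) is exactly what the paper delegates to \cite[Theorem~3.9]{KimChek}. The only cosmetic differences are the harmless factor $\tfrac12$ in your anti-symmetrization and that the fixed neighborhood is $\Psi(T^*_{4\epsilon}L)$ rather than $\Psi(T^*_{2\epsilon}L)$.
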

\begin{proof}
We first observe that every simple $J_\infty$-holomorphic plane $u\colon \C\cong S^2\setminus\{\infty\}\to S^2\times S^2\setminus L$ is not $R$-invariant for topological reasons, namely $\im u\ne R(\im u)$. Assume to the contrary that $\im u=R(\im u)$. Let $\rho_0$ be an antiholomorphic involution on $S^2$ leaving $\infty\in S^2$ invariant. Since $u$ and $R\circ u\circ \rho_0|_{S^2\setminus \{\infty\}}$ are simple $J_\infty$-holomorphic planes whose images coincide, there exists $\sigma\in \Aut(S^2)$ such that $\sigma(\infty)=\infty$ and
\begin{equation}\label{eq: identity}
u= R\circ u \circ \rho_0 \circ \sigma|_{S^2\setminus \{\infty\}},	
\end{equation}
see \cite[Theorem~3.7]{Nelson} and \cite[Corollary~2.5.4]{McSalJholo}.
By applying \eqref{eq: identity} twice, we obtain that $u\circ (\rho_0\circ \sigma|_{S^2\setminus \{\infty\}})^2=u$.
Since $u$ is simple, $\rho_0\circ \sigma|_{S^2\setminus \{\infty\}}$ is an involution of $\C$ and hence has a fixed point, say $z_0\in \C$.
We then see that $u(z_0)\in \Fix(R)=L$, which is a contradiction.
Since $J_\infty$-holomorphic planes (asymptotic to periodic orbits) are not $R$-invariant and are proper, the lemma follows almost verbatim from the proof of \cite[Theorem~3.9]{KimChek}. 
We emphasize that our $J_\infty\in \mathcal{J}^{\cyl,\, \reg}_R(S^2\times S^2\setminus L)$ is required to satisfy $\Psi^*J_\infty=J_{\cyl}$ on $U\setminus L$, but this is not problematic.
By convexity, $u$ cannot be entirely contained in a cylindrical end $U\setminus L$ since otherwise it violates the maximum principle.
Therefore, the transversality argument works by perturbing $J_\infty$ outside of $U\setminus L$.
\end{proof}
\begin{remark}
Since $\mathcal{J}_R$ is closed in $\mathcal{J}$ (in the $C^\infty$-topology), the Baire category theorem and Lemma \ref{lem: equitrans} imply that $\mathcal{J}_R^{\reg}$ is dense in $\mathcal{J}_R$, and hence not empty.
\end{remark}

\subsection{Cieliebak--Schwingenheuer's criterion}\label{sec: CS}
We give a review on the Cieliebak--Schwingenheuer criterion \cite[Theorem 1.1]{CS} when a monotone Lagrangian torus in $S^2\times S^2$ is Hamiltonian isotopic to the Clifford torus $\mathbb{T}_{\Cl}$. A monotone Lagrangian torus $L$ in $S^2\times S^2$ is called \emph{fibered} if there exist a foliation $\mathcal{F}$ of $S^2\times S^2$ by symplectic 2-spheres in the homology class $A_2$ and a symplectic 2-sphere $\Sigma$ in the homology class $A_1$ such~that
\begin{itemize}
	\item $\Sigma$ is transverse to the leaves of $\mathcal{F}$.
	\item $\Sigma$ is disjoint from $L$.
	\item The leaves of $\mathcal{F}$ intersect $L$ in a circle or not at all.
\end{itemize}
In this case, we say that $L$ is \emph{fibered by $\mathcal{F}$ and $\Sigma$}.
It is a highly non-trivial result that any monotone Lagrangian torus in $S^2\times S^2$ is fibered. This result, which originally goes back to Ivrii's thesis \cite{Ivrii}, is proved by Dimitroglou Rizell--Goodman--Ivrii \cite[Theorem D]{DGI} based on the neck-stretching argument \cite{BEHWZ}.

Consider a monotone Lagrangian torus $L$ in $S^2\times S^2$ which is fibered by $\mathcal{F}$ and $\Sigma$. Each leaf of the foliation $\mathcal{F}$ intersecting $L$ is written as a union of two closed discs glued along the embedded loop given by the intersection of $L$ and the leaf. Hence, the discs intersecting $\Sigma$ form a solid torus $T$ with boundary $\p T=L$. Note that the discs which do not intersect $L$ also define a solid torus $T'$ with $\p T'=L$.

\begin{example}[Clifford torus]\label{ex: cliff}
Let $i\oplus i$ be the split complex structure on $S^2\times S^2$.
Gromov's foliations are given by $\mathcal{F}_1$ and $\mathcal{F}_2$ whose leaves are the holomorphic spheres $F_{1,y}:=S^2\times \{y\}$ for $y\in S^2$ and $F_{2,x}:=\{x\}\times S^2$ for $x\in S^2$, respectively.
The associated symplectic $S^2$-fibration is defined as follows.
Fix one leaf $F_{1,y_0}$ of $\mathcal{F}_1$. Since each leaf $F_{2,x}$ of $\mathcal{F}_2$ intersects $F_{1,y_0}$ transversely at a unique point $(x,y_0)$, we can define a symplectic $S^2$-fibration of $S^2\times S^2$ by sending $F_{2,x}$ to $(x,y_0)\in F_{1,y_0}\cong S^2$.
In this case, this is the projection $\pi(x,y)=x$ of $S^2\times S^2$ onto the first $S^2$-factor.
One checks that the Clifford torus $\mathbb{T}_{\Cl}=S^1\times S^1$ is fibered by $\mathcal{F}_2$ and $F_{1,y_0}$.
Each fiber $F_{2,x}\cong S^2$ intersecting $\mathbb{T}_{\Cl}$ is a union of two holomorphic discs of Maslov index 2 with boundary on $\mathbb{T}_{\Cl}$.
The $S^1$-family of the discs intersecting $F_{1,y_0}$ forms a solid torus $T$ with $\p T=\mathbb{T}_{\Cl}$. It is worth noting that we can find another symplectic section $F_{1,y_1}$ for some $y_1\in S^2$ (for example, the antipodal point of $y_0\in S^2$) which does not intersect the solid torus $T$.
	
\end{example}

The criterion of Cieliebak--Schwingenheuer says that the existence of the second \emph{nice} symplectic 2-sphere in the homology class $A_1$ guarantees that a given monotone Lagrangian torus in $S^2\times S^2$ is Hamiltonian isotopic to the Clifford torus. 
\begin{theorem}[Cieliebak--Schwingenheuer]\label{thm: CS}
Let $L$ be a monotone Lagrangian torus in $S^2\times S^2$ which is fibered by $\mathcal{F}$ and $\Sigma$. Suppose that there exists a second symplectic 2-sphere $\Sigma'$ of $S^2\times S^2$ in the homology class $A_1$ such that
\begin{itemize}
	\item $\Sigma'$ is transverse to the leaves of $\mathcal{F}$,
	\item $\Sigma'$ is disjoint from $\Sigma$ and the solid torus $T$.
\end{itemize}
Then $L$ is Hamiltonian isotopic to the Clifford torus~$\mathbb{T}_{\Cl}$.
\end{theorem}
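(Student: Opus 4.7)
The plan is to use the two disjoint symplectic sections $\Sigma$ and $\Sigma'$ to bring the fibration determined by $\mathcal{F}$ into standard form, and then to straighten $L$ first fiberwise and then base-wise onto the Clifford torus.

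First, I would standardize the bundle. By positivity of intersection with $A_1$, the foliation $\mathcal{F}$ defines a symplectic $S^2$-fibration $\pi\colon S^2\times S^2\to B\cong S^2$ (Remark~\ref{rem: adj}(ii)), and $\Sigma$, $\Sigma'$ each meet every leaf transversely in a single point, hence are symplectic sections of $\pi$. Combining a topological trivialization of $\pi$ with Moser's theorem (noting that any two symplectic forms on $S^2\times S^2$ giving the generators $A_1,A_2$ the same areas lie in the same cohomology class and are isotopic) yields a symplectomorphism of $S^2\times S^2$ carrying $\pi$ to the first projection, $\Sigma$ to $S^2\times\{n\}$, and $\Sigma'$ to $S^2\times\{s\}$, where $n,s\in S^2$ are antipodal points on the fiber sphere. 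Each leaf of $\mathcal{F}$ becomes a fiber $F_b=\{b\}\times S^2$.

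Next, I would describe $L$ in this standardized picture. Since $L$ meets each fiber in a circle or not at all and is a connected torus, the image $\gamma=\pi(L)\subset B$ is a smoothly embedded circle, and each circle $L\cap F_b$ separates $(b,n)$ from $(b,s)$ in $F_b$. The solid torus $T$ is swept out by the half-discs of $F_b$ containing $(b,n)$, so the hypothesis $\Sigma'\cap T=\emptyset$ places $(b,s)$ in the complementary half-disc for each $b\in\gamma$. Monotonicity of $L$, applied first to a fiber half-disc and then to a disc in the section $\Sigma$ lifted from a disc in $B$ bounded by $\gamma$, forces both pairs of half-discs to have equal area. Consequently $L\cap F_b$ is the equator of $F_b$ relative to $\{(b,n),(b,s)\}$, and $\gamma$ bisects the area of $B$. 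A fiberwise Moser argument, smooth in $b\in\gamma$ and fixing the two marked points, then produces a family of Hamiltonian diffeomorphisms of the $F_b$'s carrying each $L\cap F_b$ to the standard fiber equator; extending by the identity over $B\setminus\gamma$ via a cutoff in a tubular neighborhood of $\gamma$ gives a smooth fiber-preserving symplectomorphism of $S^2\times S^2$ isotopic to the identity, hence Hamiltonian since $H^1(S^2\times S^2;\R)=0$. The result is $\gamma\times S^1_{\mathrm{eq}}$, where $S^1_{\mathrm{eq}}$ is the standard equator of the fiber $S^2$; a final Hamiltonian on $(B,\omega)$ taking $\gamma$ to the equator of $B$ (which exists because $\gamma$ bisects $B$) brings $L$ onto $\mathbb{T}_{\Cl}$, cf.~Example~\ref{ex: cliff}.

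The main obstacle is arranging the first step symplectically while matching both sections simultaneously: the Moser-type argument has to be carried out relative to $\Sigma$ and $\Sigma'$ at the same time, so one must check that the relevant cohomology classes and volumes agree under the topological trivialization and that the Moser interpolation can be chosen to fix each section throughout. The fiberwise straightening together with the cutoff near $\gamma$ is technical but should follow routinely once the bundle has been standardized, and the symplectic-to-Hamiltonian promotion comes for free from the vanishing of $b_1(S^2\times S^2)$.
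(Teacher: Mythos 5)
First, note that the paper does not prove this statement at all: it is quoted verbatim from Cieliebak--Schwingenheuer \cite[Theorem 1.1]{CS}, and the only comment offered is that the actual proof rests on ``a sophisticated version of the Lalonde--McDuff inflation procedure \cite{LM}.'' Your proposal is therefore an attempt at a genuinely different, more elementary route (direct standardization plus fiberwise Moser), and the places where it is vague are exactly the places where the published proof has to resort to inflation.

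The central gap is the fiberwise straightening step. A map of the form $(b,z)\mapsto (b,\psi_b(z))$, where $\psi_b$ is a $b$-dependent Hamiltonian diffeomorphism of the fiber, is \emph{not} a symplectomorphism of the product: pulling back $\pi_B^*\omega_B+\pi_F^*\omega_F$ produces mixed $db\wedge dz$ terms. If instead you flow the total-space Hamiltonian $H(b,z)=\chi(b)h_b(z)$, the flow is Hamiltonian but no longer fiber-preserving, so there is no reason its time-one map carries $L\cap F_b$ into $F_b$, let alone onto the standard equator; ``extending by the identity via a cutoff'' does not resolve this. Controlling this interplay between the base and fiber directions while keeping the isotopy Hamiltonian is precisely the content of the inflation argument in \cite{CS}, and it is also where the hypothesis that $\Sigma'$ misses the solid torus $T$ (not merely $L$) is actually used; your sketch never invokes that hypothesis in an essential way. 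There are further unaddressed points: (i) your area-bisection argument for $\gamma=\pi(L)$ appeals to ``a disc in the section $\Sigma$,'' but $\Sigma$ is disjoint from $L$, so no disc inside $\Sigma$ has boundary on $L$ and monotonicity cannot be applied to it --- one needs a disc of Maslov index $2$ whose boundary generates the other factor of $H_1(L)$, and relating its symplectic area to the area enclosed by $\gamma$ in $B$ requires an argument; (ii) $\pi(L)$ being a \emph{smoothly embedded} circle requires the leaves to meet $L$ cleanly, which is not part of the definition of ``fibered''; and (iii) the simultaneous symplectic standardization of the fibration together with both sections, which you yourself flag as the main obstacle, is left unproved. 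As it stands the proposal is a plausible outline of the naive strategy, but each of the steps that makes \cite{CS} a nontrivial paper is asserted rather than established.
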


The proof is based on a sophisticated version of the Lalonde--Mcduff inflation procedure \cite{LM}.
\begin{remark}
The converse of Theorem \ref{thm: CS} obviously holds, namely if a monotone Lagrangian torus $L$ in $S^2\times S^2$ is Hamiltonian isotopic to the Clifford torus $\mathbb{T}_{\Cl}$, then $L$ is fibered by some $\mathcal{F}$, $\Sigma$, and the second symplectic 2-sphere $\Sigma'$ as well.
\end{remark}

\section{Proof of the Main Theorem}\label{sec: sec3}
Throughout this section, we let $L=\Fix(R)$ be a real Lagrangian torus in $S^2\times S^2$ for some antisymplectic involution $R$ of $S^2\times S^2$. Recall that every $J$-holomorphic sphere of $S^2\times S^2$ in the homology class $A_i$ for $i=1,2$ is embedded, see Remark \ref{rem: adj}. We start with the following simple observation.
\begin{lemma}\label{lem: circleorempty}
Let $J\in \mathcal{J}_R$ and $i=1,2$. Suppose that $u\colon S^2 \to S^2\times S^2$ is a $J$-holomorphic sphere in the homology class $A_i$. If $u$ intersects $L$, then the embedded 2-sphere $\im u$ is $R$-invariant and $\im u\cap L$ is diffeomorphic to $S^1$.
\end{lemma}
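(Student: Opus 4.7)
The plan is to produce a second $J$-holomorphic sphere in class $A_i$ out of $u$ by combining $R$ with an antiholomorphic involution of the domain, show that it shares a point with $u$, and then invoke positivity of intersections. Concretely, let $\rho\colon S^2\to S^2$ be complex conjugation (an antiholomorphic involution) and set
\[
v := R\circ u\circ \rho\colon S^2\longrightarrow S^2\times S^2.
\]
Since $J\in\mathcal{J}_R$ means $R^*J=-J$, one verifies that $R$ carries $J$-holomorphic curves to anti-$J$-holomorphic ones; precomposing with the antiholomorphic $\rho$ restores $J$-holomorphicity, so $v$ is $J$-holomorphic.

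The next step is to identify the homology class of $v$. Because $\rho$ reverses orientation on $S^2$ and Lemma~\ref{lem: antisymp_matrix} (applied since $L\cong T^2$) gives $R_*A_i=-A_i$, one computes
\[
(R\circ u\circ\rho)_*[S^2] \;=\; R_*\,u_*\,\rho_*[S^2] \;=\; -R_*A_i \;=\; A_i,
\]
so $v$ still represents $A_i$. Now suppose $x=u(z_0)\in L$; since $R(x)=x$, we have
\[
v\bigl(\rho(z_0)\bigr) \;=\; R\bigl(u(z_0)\bigr) \;=\; x \;\in\; \im u\cap \im v.
\]
Both $u$ and $v$ are embedded (Remark~\ref{rem: adj}(iii)) $J$-holomorphic spheres in the same class $A_i$ with $A_i\bullet A_i=0$, so by positivity of intersections they must coincide as unparametrized curves. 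Thus $\im u = \im v = R(\im u)$, establishing the $R$-invariance.

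Finally, $R$ restricts to a smooth involution of the embedded 2-sphere $\Sigma:=\im u$. Since $R^*(\omega\oplus\omega)=-(\omega\oplus\omega)$ and $\Sigma$ is symplectic, $R|_\Sigma$ is orientation-reversing, and its fixed point set equals $\Sigma\cap L$, which is nonempty by hypothesis. By the classical (Smith-theoretic) classification of smooth involutions on $S^2$, any orientation-reversing smooth involution on $S^2$ with nonempty fixed set has fixed set diffeomorphic to $S^1$. Hence $\im u\cap L\cong S^1$.

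The computation of $R_*A_i$ is essentially the only place where the torus hypothesis enters, and I expect it to be the only subtlety: without the identification $R_*=-\mathrm{Id}$ from Lemma~\ref{lem: antisymp_matrix}, the class of $v$ need not agree with that of $u$, and the positivity-of-intersections argument would fail. Once the homological bookkeeping is in place, the rest is a short, standard application of positivity of intersections plus the topological fact about orientation-reversing involutions of $S^2$.
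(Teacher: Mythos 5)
Your proof is correct and follows essentially the same route as the paper: form $R\circ u\circ\rho$, use Lemma~\ref{lem: antisymp_matrix} to see it still represents $A_i$, conclude $\im u$ is $R$-invariant by positivity of intersections, and finish with the classification of orientation-reversing involutions of $S^2$. The only differences (deriving orientation-reversal from $R^*\omega=-\omega$ rather than from $R_*A_i=-A_i$, and citing Smith theory rather than the conjugacy of involutions of $S^2$ into $O(3)$) are cosmetic.
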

\begin{proof}
Consider the $J$-holomorphic sphere in $S^2\times S^2$ given by
$$
u':=R\circ u \circ \rho,
$$
where $\rho(z)=\bar{z}^{-1}$ denotes the antiholomorphic involution of $S^2\cong \C\cup \{\infty\}$ with $\Fix(\rho)\cong S^1$.
Here, $u'$ is $J$-holomorphic since $J$ is $R$-anti-invariant.
By Lemma \ref{lem: antisymp_matrix}, $u'$ represents the homology class $A_i$.
By positivity of intersections, $u'$ must be one leaf of Gromov's foliation $\mathcal{F}_i$ associated to $J$ as in Theorem \ref{thm: gromov}.
Pick any $x\in \im u\cap L$. Since $u(z)=x$ for some $z\in S^2$, we see that
	$$
	u'(\rho(z))=R\circ u\circ \rho(\rho(z))=R(u(z))=R(x)=x,
	$$
	which shows that $u'$ passes through $x$.
	This implies that $\im u=\im u'$, and hence $\im u$ is $R$-invariant as desired.
	Recalling that $u$ is embedded, the antisymplectic involution $R$ of $S^2\times S^2$ restricts to a smooth involution $\tau:=R|_{\im u}$ of the sphere $S^2\cong \im u$.
	Since $R_*A_i=-A_i$ by Lemma \ref{lem: antisymp_matrix}, the involution $\tau$ is orientation-reversing.
	Recall that every smooth involution of $S^2$ is conjugated to a map in $O(3)$, see \cite[Theorem~4.1]{Kerek}.
	Hence $\tau$ is conjugated to either the reflection or the antipodal map on $S^2$. 
	Since $x\in \Fix(\tau)$ and, in particular, $\Fix(\tau)$ is nonempty, $\tau$ is conjugated to the reflection, and hence  $\Fix(\tau)=\im u \cap L\cong S^1$.
\end{proof}

\begin{remark}\label{rem: interchange}
We notice that any smooth involution $\sigma$ of $S^2$ with fixed point set $\Fix(\sigma)\cong S^1$ must interchange the two discs obtained by cutting $S^2$ along the embedded loop $\Fix(\sigma)$.
This again follows from the fact that $\sigma$ is conjugated to a map $\tau\in O(3)$. 
Since $\Fix(\sigma)\cong \Fix(\tau)\cong S^1$, we deduce that $\tau$ is a reflection. Hence, $\sigma$ must interchange the two discs as desired.
\end{remark}
An immediate consequence of Lemma \ref{lem: circleorempty} together with Proposition \ref{prop: displace} is that the real Lagrangian torus $L=\Fix(R)$ is fibered by Gromov's foliations $\mathcal{F}_1$ and $\mathcal{F}_2$ associated to some $J\in \mathcal{J}_R$. More precisely, we have the following.
\begin{corollary}\label{cor: fibered}
Let $L=\Fix(R)$ be a real Lagrangian torus in $S^2\times S^2$ for an antisymplectic involution $R$. Then there exist $J\in \mathcal{J}_R$ and a leaf $\Sigma\in \mathcal{F}_1$ such that $L$ is fibered by $\mathcal{F}_2$ and $\Sigma$, that~is,
\begin{itemize}
	\item $\Sigma$ is transverse to the leaves of $\mathcal{F}_2$.
	\item $\Sigma$ is disjoint from $L$.
	\item The leaves of $\mathcal{F}_2$ intersect $L$ in a circle or not at all.
\end{itemize}
Here, $\mathcal{F}_1$ and $\mathcal{F}_2$ denote Gromov's foliations associated to $J$.
\end{corollary}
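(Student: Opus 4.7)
The plan is to combine the three tools already prepared, namely Proposition~\ref{prop: displace}, Gromov's foliation theorem (Theorem~\ref{thm: gromov}), and Lemma~\ref{lem: circleorempty}, in a direct way. First I would invoke Proposition~\ref{prop: displace} to pick an $R$-anti-invariant compatible almost complex structure $J\in \mathcal{J}_R$ together with a $J$-holomorphic sphere $u_0\colon S^2\to S^2\times S^2$ in the homology class $A_1$ whose image is disjoint from $L$. For this same $J$, Theorem~\ref{thm: gromov} produces the two transversal Gromov foliations $\mathcal{F}_1$ and $\mathcal{F}_2$ by embedded $J$-holomorphic spheres in the classes $A_1$ and $A_2$, respectively.

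The next step is to identify $\im u_0$ as a leaf of $\mathcal{F}_1$. Pick any point $p\in \im u_0$, and let $\Sigma\in \mathcal{F}_1$ be the unique leaf passing through $p$. Both $\Sigma$ and $\im u_0$ are embedded $J$-holomorphic spheres in the class $A_1$, and $A_1\bullet A_1=0$, so by positivity of intersections (see \cite[Section~2.6]{McSalJholo}) any two such spheres are either disjoint or coincide. Since they share the point $p$, they coincide, so $\Sigma=\im u_0$ is a leaf of $\mathcal{F}_1$ disjoint from $L$.

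It remains to verify the three bullet points of the definition of fiberedness for the pair $(\mathcal{F}_2,\Sigma)$. The first one, transversality of $\Sigma$ to the leaves of $\mathcal{F}_2$, is immediate from the transversality of the two Gromov foliations in Theorem~\ref{thm: gromov}. The second one, that $\Sigma$ is disjoint from $L$, holds by construction since $\Sigma=\im u_0$ and $\im u_0\cap L=\emptyset$. For the third one, let $F\in \mathcal{F}_2$ be an arbitrary leaf; it is a $J$-holomorphic sphere in class $A_2$, so by Lemma~\ref{lem: circleorempty} either $F\cap L=\emptyset$ or $F\cap L\cong S^1$.

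There is no real obstacle here: all the analytic work has already been carried out in Proposition~\ref{prop: displace} (the SFT neck-stretching argument) and in Lemma~\ref{lem: circleorempty} (which uses the $R$-anti-invariance of $J$ together with Lemma~\ref{lem: antisymp_matrix} and the classification of involutions of $S^2$). The only subtle point to keep in mind is to choose the \emph{same} $J\in \mathcal{J}_R$ both for applying Proposition~\ref{prop: displace} and for defining the foliations $\mathcal{F}_1,\mathcal{F}_2$, so that Lemma~\ref{lem: circleorempty} applies uniformly to every leaf of $\mathcal{F}_2$. Once this is observed, the corollary follows by assembling the above three steps.
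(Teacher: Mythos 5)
Your proposal is correct and follows essentially the same route as the paper: apply Proposition~\ref{prop: displace} to get $J\in\mathcal{J}_R$ and a sphere in class $A_1$ disjoint from $L$, then combine Theorem~\ref{thm: gromov}, positivity of intersections, and Lemma~\ref{lem: circleorempty}. Your extra step of explicitly identifying $\im u_0$ with a leaf of $\mathcal{F}_1$ via positivity of intersections is a small but welcome clarification that the paper leaves implicit.
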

\begin{proof}
	By Proposition \ref{prop: displace}, we can choose an embedded $J$-holomorphic sphere $\Sigma$ which represents the homology class $A_1$ and is disjoint from $L$.
	It follows from positivity of intersections that $\Sigma$ is transverse to the leaves of $\mathcal{F}_2$.
	By Lemma \ref{lem: circleorempty}, we see that the leaves of $\mathcal{F}$ intersect $L$ in a circle or not at all.
	Since $J$-holomorphic spheres are symplectic, the corollary follows.
\end{proof}
We are now in position to prove the main theorem.
\begin{proof}[Proof of the Main Theorem]
Applying Corollary \ref{cor: fibered}, choose  $J\in \mathcal{J}_R$ and a leaf $\Sigma\in \mathcal{F}_1$ such that the real Lagrangian torus $L=\Fix(R)$ is fibered by $\mathcal{F}_2$ and $\Sigma$. We parametrize $\Sigma$ by an embedded $J$-holomorphic sphere $u\colon S^2\to S^2\times S^2$ in the homology class $A_1$. Consider another embedded $J$-holomorphic sphere $u':=R\circ u \circ \rho$  which is disjoint from $L$ as well. By Lemma~\ref{lem: antisymp_matrix}, $u'$ also represents the homology class~$A_1$. We write $\Sigma'=\im u'$ for its image. By positivity of intersections, we know that
	\begin{itemize}
		\item 	$\Sigma'$ is transverse to the leaves of $\mathcal{F}_2$.
		\item We have $\Sigma=\Sigma'$ or $\Sigma\cap \Sigma'=\emptyset$.
	\end{itemize}
For each leaf $F$ of $\mathcal{F}_2$ intersecting $L$, the antisymplectic involution $R$ of $S^2\times S^2$ restricts to the orientation-reversing involution $\tau$ on the leaf $F\cong S^2$ with fixed point set $\Fix(\tau)=L\cap F\cong S^1$. The embedded loop $\Fix(\tau)$ cuts $F$ into two closed discs glued along $\Fix(\tau)$. Since $\tau$ must interchange the two discs by Remark \ref{rem: interchange}, we deduce that $\Sigma\cap F$ and $\Sigma'\cap F$ are disjoint. Hence, $\Sigma$ and $\Sigma'$ must be disjoint.
Moreover, the second symplectic sphere $\Sigma'$ is disjoint from the solid torus $T$ which is the union of one of the two discs of each leaf intersecting $\Sigma$. Now we can apply the Cieliebak--Schwingenheuer criterion (Theorem \ref{thm: CS}) to complete the proof.
\end{proof}

\subsection*{Acknowledgement}
This paper came out of ideas I learned from Kai Cieliebak. The author cordially thanks Kai Cieliebak for invaluable comments, and Felix Schlenk and Urs Frauenfelder for fruitful discussions. The author also deeply thanks the anonymous referee who pointed out a mistake in the original proof of the main theorem. This work is supported by Samsung Science and Technology Foundation under Project Number SSTF-BA1901-01. Finally, the author always thanks KIAS for providing a great place to conduct research.

\bibliographystyle{abbrv}
\bibliography{mybibfile}

\end{document}